\documentclass[twoside,11pt]{article}

\usepackage{amssymb, amsmath, latexsym, mathrsfs, verbatim, calc}
\usepackage{amsthm}
\usepackage{amssymb}

\usepackage{graphics,color,graphicx,shortvrb}

\setlength{\textheight}{8.5in} \setlength{\textwidth}{6.0in}
\setlength{\evensidemargin}{0.2in} \setlength{\oddsidemargin}{0.2in}
\setlength{\headheight}{0in} \setlength{\headsep}{0in}

\setlength{\textwidth}{16cm}
\setlength{\textheight}{23cm}
\setlength{\topmargin}{-1cm}
\setlength{\oddsidemargin}{-1mm}
\setlength{\evensidemargin}{-1mm}
\raggedbottom

\usepackage[latin1]{inputenc}   

\def \N{{\rm I\!N}}
\def\i{{\bf 1}}

\def \R{{\mathop{{\rm I\negthinspace R}}}}

%
%

%

\bibliographystyle{plain}

\def\Dt0{{\bf D}(t_0)}

\def\to{\rightarrow}

\def\bF{{\bf F}}



\newcommand{\ba}{\[\begin{array}{rl}}
\newcommand{\ea}{\end{array}\]}
\newcommand{\be}{\begin{equation}}
\newcommand{\ee}{\end{equation}}
\newcommand{\bea}{\begin{eqnarray}}
\newcommand{\eea}{\end{eqnarray}}
\newcommand{\beaa}{\begin{eqnarray*}}
\newcommand{\eeaa}{\end{eqnarray*}}

\newtheorem{thm}{Theorem}[section]
\newtheorem{Lemma}[thm]{Lemma}

\newtheorem{prop}[thm]{Proposition}

\newtheorem{Definition}[thm]{Definition}

\newcommand{\AR}{{\cal A}}
\newcommand{\BR}{{\cal B}}
\newcommand{\CR}{{\cal C}}

\newcommand{\FR}{{\cal F}}

\newcommand{\NR}{{\cal N}}

\newcommand{\SR}{{\cal S}}
\newcommand{\UR}{{\cal U}}
\newcommand{\VR}{{\cal V}}

\begin{document}
\title{Pathwise strategies for stochastic differential games\\ 
\large with an erratum to ``Stochastic Differential Games with Asymmetric Information''}
\author{P. Cardaliaguet\thanks{Ceremade, Universit\'e Paris-Dauphine,
Place du Mar\'echal de Lattre de Tassigny, 75775 Paris cedex 16 (France). e-mail:  cardaliaguet@ceremade.dauphine.fr}  \and
C. Rainer\thanks{Universit\'e de Bretagne Occidentale, 6, avenue Victor-le-Gorgeu, B.P. 809, 29285 Brest cedex, France.
e-mail: Catherine.Rainer@univ-brest.fr}}
\maketitle
\begin{abstract} We introduce a new notion of pathwise strategies for stochastic differential games. This allows us to give a correct meaning to some statement asserted in \cite{sdgai}. 
\end{abstract}

\section{Introduction.}

In this short note we develop a new notion of  strategies for stochastic differential games. 
We present our concept in the framework of two-player, zero-sum, differential games. The players are labelled Player I and Player II, Player I being minimizing while Player II is maximizing.
We assume that the players have perfect monitoring, i.e., they observe each other's action perfectly. The state of the game satisfies a stochastic differential equation, that we assume driven by a Brownian motion, and which is controlled by both players. 

Nonanticipative strategies for  deterministic differential games have been introduced in a series of papers by Varaiya \cite{VAR167}, Roxin \cite{ROX69}, Elliott and Kalton \cite{ELKA72}: in this framework a strategy (for Player I) is a nonanticipative map from Player II's set of controls to Player I's. Adapting this idea to stochastic differential games lead  Fleming and Souganidis in their pioneering work \cite{flemingsouganidis}  to define a notion of strategy (again for Player I) as nonanticipative map from the set of {\it adapted} controls of Player II to the set of {\it adapted} controls of Player I. This approach has subsequently been used by most authors working on stochastic differential games, sometimes with some variants: see, e.g., Buckdahn-Li \cite{buckli}.  

If the notion of nonanticipative strategies makes perfectly sense for {\it deterministic} differential games---because the players indeed observe each other's action---one can object that, for {\it stochastic} ones,   the players do not actually observe their opponent's {\it adapted} control, but just a {\it realization} of this control in the actual state of the world: more precisely, assume that Player II plays the control $v=v(t,\omega)$. Then, in the state $\omega$ and at time $t$, Player I has not observed the full map $(s,\omega')\to v(s,\omega'_{[0,t]})$, but only the map $s\to v(t,\omega_{[0,t]})$. For this reason, the authors of the present paper introduced in  \cite{sdgai} a notion of {\it pathwise} nonanticipative strategies, formalizing the fact that the players only observe their opponent's action in the actual state, as well as the path of the resulting solution of the stochastic differential equation. 

Unfortunately handling such pathwise strategies is quite subtle and, in \cite{sdgai}, we overlooked some difficulties 
(we explain this in details in section \ref{sec:3}). 
In the present paper we show how to overcome this problem. We  still keep the flavor of pathwise strategies, but require the stronger condition that the players observe the control actually played by their opponent {\it as well as  the Brownian path} (instead of the trajectory of SDE). The key point is that the players can nevertheless deduce the resulting solution of the SDE: to show this we use the pathwise construction of stochastic integrals introduced  by Nutz \cite{Nutz}.

This note is divided into two parts: first we introduce the new notion of strategies and show the existence of a value and its characterization for a classical two-player zero-sum game with a final cost (to better explain  our ideas, we have chosen to present our approach in this simple framework).  The second part of the note is devoted to the erratum of the paper \cite{sdgai}.\\

\noindent {\bf Acknowledgment :}  We are very much indebted with Rainer Buckdahn, who pointed out to us the flaw in the paper \cite{sdgai}. 

This work has been partially supported by the Commission of the
European Communities under the 7-th Framework Programme Marie
Curie Initial Training Networks   Project SADCO,
FP7-PEOPLE-2010-ITN, No 264735, and by the French National Research Agency
 ANR-10-BLAN 0112.

\section{The classical stochastic differential game revisited.}
\label{defdef}

Let $T>0$ be a deterministic time horizon. For all $t\in[0,T]$,
let $\Omega_t$ be the set of continuous maps from $[t,T]$ to $\R^d$ endowed with the $\sigma$-algebra generated by the coordinate process and $P_t$, the Wiener measure on it. We denote by  $W$ the canonical process:  $W_s(\omega)=\omega(s)$. We introduce also the filtration $\bF_t=(\FR_{t,s}=\sigma\{ W_r-W_t,r\in[t,s]\}$, completed by all null sets of $P_t$.

For any $t\in [0,T]$ we denote by ${\mathcal C}^0([t,T],\R^N)$ the set of continuous maps from $[t,T]$ into $\R^N$ endowed with the sup norm and by ${\mathcal B}_t$ the associated Borel $\sigma-$algebra. 

The dynamic of the game is given by
\begin{equation}
\label{dynjeu}
\left\{\begin{array}{l}
dX_s=f(X_s,u_s,v_s)ds+\sigma(X_s,u_s,v_s)dW_s, s\in [t,T],\\
X_t=x\in \R^N,
\end{array}\right.
\end{equation}
with $u$ and $v$ two   $\bF_t$-adapted processes with values in some compact metric spaces $U$ and $V$. The process $u$ (resp. $v$) represents the action of Player I (resp. Player II).
We denote by $X^{t,x,u,v}$ the solution of (\ref{dynjeu}).

Throughout the paper, the maps $f:\R^N\times U\times V\to \R^N$ and $\sigma: R^N\times U\times V\to \R^{N\times d}$ are supposed to be bounded, continuous, Lipschitz continuous in $(t,x)$ uniformly with respect to $u,v$. The sets $U$ and $V$ are compact subsets of finite dimensional spaces. We denote by $U_{t}$ (resp. $V_{t}$)  the set of  measurable  maps from $[t,T]$ to $U$ (resp. $V$), while 
$\UR(t)$ (resp. $\VR(t)$) denotes the set of  $\bF_t$-adapted processes with values in $U$ (resp. $V$).
In what follows, the sets
$U_t$ and $V_t$ are  endowed with the   $L^1$-distance and the Borel $\sigma$-field generated by it.

\begin{Definition}\label{def:strategy}
A  strategy for Player I at time $t$ is a nonanticipative, Borel-measurable map $\alpha: \Omega_t\times V_{t}\to U_{t}$ with delay: there exists $\delta>0$ such that, for any $t'\in[t,T]$,  all $(v_1,v_2)\in V_{t}^2$ and for $P_t$-a.s. any $(\omega_1,\omega_2)\in\Omega_t^2$, if $(\omega_1,v_1)= (\omega_2,v_2)$ a.s. on $[t,t']$, then
$\alpha(\omega_1,v_1)= \alpha(\omega_2,v_2)$ a.s. on $[t,t'+\delta]$. 
Strategies for Player~II are defined in a symmetrical way. The set of strategies for Player I (resp. Player II) is denoted by $\AR(t)$ (resp. $\BR(t)$).
\end{Definition}

 Let us point out that, for all $ \alpha\in\AR(t)$ and $v\in\VR(t)$, $\alpha(v)$ is a process and belongs to $\UR(t)$. In the same way, for all $\beta\in\BR(t)$ and $u\in\UR(t)$, $\beta(u)$ belongs to $\VR(t)$. We denote by $\UR_d(t)$ the subset of $\UR(t)$ of controls $u\in\UR(t)$ for which there exists some $\delta>0$ such that, $(u_s)_{s\in[t,T]}$ is adapted to $\bF^\delta_t:=(\FR_{t,(s-\delta)\vee t})$. The set $\VR_d(t)$ is defined in a similar way. We remark that the elements of $\UR_d(t)$ and $\VR_d(t)$ are predictable for the original fitration $\bF_t$.\\
Now we can state our  fix point lemma: 

\begin{Lemma}\label{lem:fixpoint} For all $t\in[t,T]$, for all $(\alpha,\beta)\in\AR(t)\times\BR(t)$, there exists a unique pair of controls $(u,v)\in \UR_d(t)\times\VR_d(t)$ which satisfies, $P$-a.s.
\begin{equation}
\label{fix}
 u=\alpha(v),\;  v=\beta(u).
\end{equation}
\end{Lemma}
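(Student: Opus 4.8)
The plan is to exploit the delay property to build the unique solution step by step on a partition of $[t,T]$, obtaining existence and uniqueness simultaneously. Let $\delta_\alpha>0$ and $\delta_\beta>0$ be the delays of $\alpha$ and $\beta$, set $\delta=\min(\delta_\alpha,\delta_\beta)$, and introduce the mesh $t_k=(t+k\delta)\wedge T$ for $k=0,1,\dots,n$, where $n$ is the first integer with $t+n\delta\ge T$. I would construct $(u,v)$ on each successive interval $[t_k,t_{k+1}]$, showing at every stage that the values produced are forced (hence uniqueness) and that they depend only on the data already available up to time $t_k$ (hence adaptedness with delay, i.e. membership in $\UR_d(t)\times\VR_d(t)$).

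First I would treat the initial interval $[t_0,t_1]=[t,t+\delta]$. Applying the delay property of $\alpha$ with $t'=t$, the agreement hypothesis ``$(\omega_1,v_1)=(\omega_2,v_2)$ a.s. on $[t,t]$'' holds trivially for every choice of paths and controls, since a single instant is Lebesgue-negligible; hence $\alpha(\omega,v)$ coincides on $[t,t+\delta]$ for $P_t$-almost every $\omega$ and every $v\in V_t$. Thus there is a deterministic path $\bar u\in U_t$ with $\alpha(v)=\bar u$ on $[t,t+\delta]$, and symmetrically a deterministic $\bar v$ with $\beta(u)=\bar v$ there. Setting $u=\bar u$ and $v=\bar v$ on $[t,t+\delta]$ gives the only possible choice satisfying (\ref{fix}) on this interval, and these restrictions are trivially $\bF^\delta_t$-adapted.

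For the induction step, assume $(u,v)$ has been uniquely determined on $[t,t_k]$ as an $\bF^\delta_t$-adapted pair. To define $v$ on $[t_k,t_{k+1}]$ I would take any $\tilde u\in\UR(t)$ extending $u|_{[t,t_k]}$, form the process $\omega\mapsto\beta(\omega,\tilde u(\omega,\cdot))$ --- measurable because $\beta$ is Borel and $\omega\mapsto\tilde u(\omega,\cdot)\in U_t$ is measurable --- and restrict it to $[t_k,t_{k+1}]$. The delay property of $\beta$ with $t'=t_k$ guarantees that this restriction depends only on $u|_{[t,t_k]}$, hence is independent of the chosen extension $\tilde u$ and is forced by (\ref{fix}); the same property shows it is $\FR_{t,t_k}$-measurable, and since $t_{k+1}-t_k\le\delta$ this yields adaptedness to $\bF^\delta_t$ on $[t_k,t_{k+1}]$. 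Defining $u$ on $[t_k,t_{k+1}]$ symmetrically via $\alpha$, and noting that the two constructions do not interfere (each uses only the opponent's already-fixed values up to $t_k$), completes the step. After finitely many steps the pair $(u,v)$ is defined on all of $[t,T]$, is the unique solution of (\ref{fix}), lies in $\UR_d(t)\times\VR_d(t)$, and is predictable for $\bF_t$ as remarked in the text.

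The step I expect to be delicate is the bookkeeping of the null sets and the measurability. Each instance of the delay property is only asserted ``$P_t$-a.s.'', with an exceptional set that a priori depends on the pair $(v_1,v_2)$; I would need to check that, because at each stage the relevant control is already a fixed measurable process rather than an arbitrary pair, these exceptional sets can be absorbed into a single $P_t$-null set per step, and that the finitely many steps keep the total exceptional set negligible. Equally, one must verify carefully that $\omega\mapsto\alpha(\omega,v(\omega,\cdot))$ is a genuine element of $\UR(t)$ --- joint measurability coming from the Borel-measurability of $\alpha$ composed with the realization map $\omega\mapsto v(\omega,\cdot)$, and adaptedness from the delay --- which is exactly the pathwise-strategy subtlety the paper is designed to address.
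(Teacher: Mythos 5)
Your construction is, in substance, exactly the paper's own proof: pass to a common delay $\delta$ (the minimum of the two delays works, since a map with delay $\delta_\alpha$ also has any smaller delay), observe that on the first interval of length $\delta$ both strategies produce values not depending on $(\omega,v)$ resp.\ $(\omega,u)$, and then extend $(u,v)$ interval by interval, each new piece being forced by the opponent's control on the preceding intervals -- which gives existence and uniqueness at once. Your device of taking an arbitrary extension $\tilde u$ of $u|_{[t,t_k]}$ and checking independence of the extension is precisely what the paper means when it says that ``it makes sense'' to set $u_{k+1}=\alpha(\omega,v_k(\omega))$ on $[t+k\delta,t+(k+1)\delta)$; the only cosmetic difference is that the paper adjusts $\delta$ so that $T-t=N\delta$ exactly, while you truncate with $\wedge T$.

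One step, however, is stated backwards. You argue that, for $s\in[t_k,t_{k+1}]$, the $\FR_{t,t_k}$-measurability of $v_s$ ``yields adaptedness to $\bF^\delta_t$ on $[t_k,t_{k+1}]$ since $t_{k+1}-t_k\le\delta$''. Adaptedness to $\bF^\delta_t$ at time $s$ means measurability with respect to $\FR_{t,(s-\delta)\vee t}$, and for $s\le t_{k+1}=t_k+\delta$ one has $(s-\delta)\vee t\le t_k$, hence $\FR_{t,(s-\delta)\vee t}\subseteq\FR_{t,t_k}$: measurability with respect to the \emph{larger} $\sigma$-algebra does not imply measurability with respect to the smaller one, so as written this gives only plain adaptedness, not the delayed adaptedness required for membership in $\VR_d(t)$. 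The repair stays entirely within your argument: instead of invoking the delay property of $\beta$ only at $t'=t_k$, invoke it at $t'=(s-\delta)\vee t$ for each $s\in[t_k,t_{k+1}]$. This shows that $v_s$ depends only on $(\omega,u)$ restricted to $[t,(s-\delta)\vee t]$, and since by the induction hypothesis $u$ restricted to that interval is $\FR_{t,(s-\delta)\vee t}$-measurable (indeed measurable with respect to an even earlier $\sigma$-algebra), $v_s$ is $\FR_{t,(s-\delta)\vee t}$-measurable, which is exactly $\bF^\delta_t$-adaptedness. With this correction your proof is complete; note that the paper's own text is no more explicit on this point (it simply asserts that $u_{k+1}$ is $\bF^\delta_t$-adapted), and it is also silent on the null-set bookkeeping that you rightly flag as the delicate issue.
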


\begin{proof} Let $\delta>0$ be a common delay for $\alpha$ and $\beta$. We can choose $\delta$ such that $T=t+N\delta$, for some $N\in\N^*$. We define  on $\Omega_t$, $\UR_k$ (resp. $\VR_k$) the set of $\bF^\delta_t$-adapted processes on the time interval $[t,t+k\delta)$ with values in $U$ (resp. $V$). 

 By definition, on $[t,t+\delta)$, the control $\alpha(\omega,v)$ does not depend on $(\omega,v)$: we can set, for all $(\omega,v)\in\Omega_t\times V_t$, $\alpha(\omega,v)=u_0$, where $u_0\in\UR_0$ (in fact $u_0$ is deterministic). And in the same way, there exists $v_0\in\VR_0$ such that, for all $(\omega,u)\in\Omega_t\times U_t$, $\beta(\omega,u)=v_0$. 
 
Assume now that, for some $k\in\{ 0,\ldots,N-1\}$, there exists a pair $(u_k,v_k)\in\UR_k\times\VR_k$ such that, on $[t, t+k\delta)$, $\alpha(v_k)=u_k$ and $\beta(u_k)=v_k$ $P$-a.s. . We set, for all $\omega\in\Omega_t$, $u_{k+1}(\omega)=u_k(\omega)$ on $[t,t+k\delta)$ and, since $\alpha$ is non anticipative with delay $\delta$, it makes sense to set $u_{k+1}=\alpha(\omega,v_k(\omega))$ on $[t+k\delta,t+(k+1)\delta)$. By assumption, $u_k$ and $v_k$ are adapted to $\bF^\delta_t$  and $\alpha$ is nonanticipative. It follows that a $u_{k+1}$ is also adapted to $\bF^\delta_t$. The process  $v_{k+1}$ can be defined in a similar way.

At the end it is sufficient to set $(u,v)=(u_N,v_N)$ to get the desired result. 
\end{proof}

The main issue with our notion of strategies is that it is not clear that the observation of the brownian path and of the realized control of the opponent  up to some time $t'$ suffices to compute the position of the system at time $t'$. The following Lemma---which is the main point in our approach---says that this is actually the case. 

\begin{Lemma}
\label{lemf}
Fix $(t,x)\in[0,T]\times\R^N$. For all $\alpha_0\in\AR(t)$, there exists a measurable function $F:=F_{\alpha_0,t,x}:(V_t\times\Omega_t,\BR(V_t)\otimes\FR_t)\rightarrow({\mathcal C}^0([t,T],\R^N),\BR_t)$ such that, for all $\bar v\in\VR_d(t)$,
\[ F(\bar v(\cdot),\cdot)=X^{t,x,\alpha_0(\bar v),\bar v} \; \; P_t\mbox{-a.s.}\; .\]
Furthermore the map $F$ is nonanticipative, in the sense that there exists $\Omega'_t\subset\Omega_t$ with $P_t(\Omega_t')=1$ such that, for all $\omega,\omega'\in\Omega_t'$ and $v,v'\in V_t$, if $(v,\omega)=(v',\omega')$ a.s. on $[t,t']$, then, $F(v,\omega)=F(v',\omega')$ on $[t,t']$. 
%
\end{Lemma}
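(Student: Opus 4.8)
The plan is to reconstruct the solution of the state equation path by path, exploiting the delay of $\alpha_0$ to tame the feedback of Player~I's control and using Nutz's pathwise stochastic integral \cite{Nutz} to make sense of the dynamics along a fixed Brownian trajectory. First I would fix a delay $\delta>0$ for $\alpha_0$ with $T=t+N\delta$ and split $[t,T]$ into the blocks $I_k=[t+k\delta,t+(k+1)\delta]$, $k=0,\dots,N-1$. As in Lemma~\ref{lem:fixpoint}, the delay property of Definition~\ref{def:strategy} shows that for every $(v,\omega)$ the restriction of $\alpha_0(v,\omega)$ to $I_k$ depends only on $(v,\omega)|_{[t,t+k\delta]}$ (and is deterministic on $I_0$). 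I would then construct $F$ by induction on $k$: assuming $F$ has been defined on $[t,t+k\delta]$ as a nonanticipative, jointly measurable functional of $(v,\omega)$, I freeze the past data $(v,\omega)|_{[t,t+k\delta]}$, which fixes both the now deterministic-in-time control $u_s=\alpha_0(v,\omega)_s$ on $I_k$ and the initial value $X_{t+k\delta}=F(v,\omega)(t+k\delta)$, and solve the state equation over the single block $I_k$.

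The core step is to solve, on one block, the equation
\[ X_s=X_{t+k\delta}+\int_{t+k\delta}^s f(X_r,u_r,v_r)\,dr+\int_{t+k\delta}^s \sigma(X_r,u_r,v_r)\,dW_r,\qquad s\in I_k, \]
path by path. The drift is an ordinary Lebesgue integral, harmless for each fixed $\omega$, whereas the diffusion term must be given a pathwise meaning. Here I would invoke Nutz's construction to define, for a predictable integrand $H$, a version of $\int\sigma\,dW$ that is measurable and nonanticipative as a functional of the path and agrees $P_t$-a.s.\ with the It\^o integral. The solution is then obtained by Picard iteration $X^{n+1}=x+\int f(X^n,u,v)\,dr+\int\sigma(X^n,u,v)\,dW$, in which every iterate is a nonanticipative jointly measurable functional of $(v,\omega)$ and the stochastic integral at each step is taken to be the pathwise one. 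The Lipschitz hypotheses on $f,\sigma$ give the usual contraction, so the iterates converge (in $L^2(P_t)$, and along a subsequence $P_t$-a.s.) to a limit that I declare to be $F$ on $I_k$; measurability and nonanticipativity pass to the limit, and concatenating the blocks produces $F$ on all of $[t,T]$.

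To identify $F$ with the solution I would fix $\bar v\in\VR_d(t)$ and substitute $v=\bar v(\cdot,\omega)$: since the pathwise integral agrees $P_t$-a.s.\ with the It\^o integral for each fixed predictable integrand, the pathwise Picard scheme coincides $P_t$-a.s.\ with the classical one defining $X^{t,x,\alpha_0(\bar v),\bar v}$, whence $F(\bar v(\cdot),\cdot)=X^{t,x,\alpha_0(\bar v),\bar v}$ $P_t$-a.s. The nonanticipativity of $F$ then follows from that of Nutz's integral, whose value at time $s$ uses the path only on $[t,s]$, combined with the delay decomposition that localizes the control.

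I expect the main obstacle to be exactly the measure-independent, jointly measurable and nonanticipative construction of the stochastic integral and of the SDE solution, so that $F$ is an honest function of $(v,\omega)$ rather than an a.s.\ equivalence class. The delicate point is to secure a single full-measure set $\Omega'_t$ on which everything converges, simultaneously over the uncountable family of inputs $v$ and over the countably many Picard steps; this is where the aggregation supplied by \cite{Nutz}, together with the joint measurability of the Riemann-sum approximations and a separability argument in $v$, becomes essential.
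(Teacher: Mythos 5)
You have correctly identified the two key ingredients---Nutz's pathwise construction \cite{Nutz} and the fact that the whole difficulty is an aggregation problem over the uncountable family of controls---but your proposal stalls exactly where the paper's proof does its real work, and the framework you set up cannot be completed. All of your quantitative statements (convergence of the Picard scheme in $L^2(P_t)$, a.s.\ along a subsequence, a single full-measure set $\Omega'_t\subset\Omega_t$ on which the scheme converges \emph{simultaneously for all} $v\in V_t$) are relative to the single measure $P_t$ on $\Omega_t$, with $v$ treated as a deterministic parameter. In that framework each fixed $v$ produces a solution defined only up to a $P_t$-null set depending on $v$, and no separability argument in $v$ repairs this: the solution map $v\mapsto X$ is continuous only in probability, not $\omega$-by-$\omega$, so you cannot pass from a countable dense set of $v$'s to all of $V_t$ pathwise. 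More importantly, even if such a universal null set existed, it would not yield what the lemma actually demands, namely the identity $F(\bar v(\cdot),\cdot)=X^{t,x,\alpha_0(\bar v),\bar v}$ for \emph{adapted} $\bar v\in\VR_d(t)$, i.e.\ after substituting a random control; almost-sure properties established for each fixed deterministic $v$ do not survive composition with $\omega\mapsto\bar v(\omega)$ (this is precisely the type of pitfall that produced the flaw in \cite{sdgai} which this note corrects).

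The mechanism missing from your proposal is the paper's lift to the product space $V_t\times\Omega_t$, equipped with the \emph{family} of probability measures $P^{\bar v}_{t}$ (the laws of the pairs $(\bar v,W)$ for $\bar v\in\VR_d(t)$) and with the filtration $\FR^*_{t,s}=\bigcap_{\bar v\in\VR_d(t)}(\BR_{t,s}\otimes\FR_{t,s}\vee\NR^{P^{\bar v}})$, under each of which the canonical coordinate $W(v,\omega)_s=\omega(s)$ is a Brownian motion. It is on this filtered space, and over this family of measures, that the theorem of \cite{Nutz} is applied: it produces one $(\FR^*_{t,s})$-adapted process $F$ which, \emph{for each} $\bar v$, is $P^{\bar v}_{t}$-a.s.\ equal to the stochastic integral, hence to the solution of the SDE whose coefficients are written in terms of the canonical control process ${\bf v}$ and of $\alpha_0({\bf v})$. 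The null set is allowed to depend on $\bar v$---no universal full-measure set is required---and the substitution property then comes for free, because the image of $P_t$ under $\omega\mapsto(\bar v(\omega),\omega)$ is exactly $P^{\bar v}_{t}$; nonanticipativity likewise follows from $(\FR^*_{t,s})$-adaptedness. Your blockwise Picard scheme exploiting the delay is a reasonable way to build the solution (the paper uses the delay only to make the control processes predictable and quotes strong existence directly), but without the product-space, family-of-measures formulation your appeal to ``aggregation'' has no family to aggregate over, and the identification step fails.
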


\begin{proof}
 Let $\bar v\in\VR_d(t)$ and $P^{\bar v}_{t}$ the law on $V_{t}\times\Omega_{t}$ of $(\bar v,W)$ under $P_{t}$. We endow the set  $V_{t}\times\Omega_{t}$ with the following filtration : let $V_{t,t'}$ be the set of measurable maps from $[t,t']$ to $V$ and set $\BR_{t,t'}=\{ \{v\in V_t,v_{[t,t']}\in B\}, B\in\BR(V_{t,t'})\}$.
Set $\displaystyle \FR^*_{t,s}=\bigcap_{\bar v\in\VR_d(t)}(\BR_{t,t'}\otimes\FR_{t,s}\vee\NR^{P^{\bar v}})$, with $\NR^{P^{\bar v}}$ the set of null sets for the probability $P^{\bar v}$. Then $(\FR_{t,s}^*,s\in[t,t'])$ is a filtration satisfying the usual assumptions and in which $(W_s(v,\omega):=\omega(s),s\in[t,t'])$ is a Brownian motion.\\
On the filtered probability space $(V_t\times\Omega_t ,\BR(V_t)\otimes\FR_t,P^{\bar v}_{t};  (\FR^*_{t,s},s\in[t,t']))$, we consider now the following SDE:
\begin{equation}
\label{sde}
\left\{ \begin{array}{l}
d\tilde X_s=b(\tilde X_s,\alpha_0({\bf v})_s,{\bf v}_s)ds+\sigma(\tilde X_s,\alpha_0({\bf v})_s,{\bf v}_s)dW_s,\; s\in[ t,t'],\\
\tilde X_{t}=x,
\end{array}\right.
\end{equation}
with 
\[ \left\{ \begin{array}{l}
W(v,\omega)_s=\omega(s),\\
{\bf v}(v,\omega)_s=v(s).
\end{array}\right.\]
Then (\ref{sde}) has a strong solution $\tilde X^{t,x,\alpha_0}$ which law, under $P^{\bar v}_{t}$ on $V_{t}\times\Omega_{t}$ coincides with the law of $X^{t,x,\alpha_0(\bar v),\bar v}$ under $P_{t}$ on $\Omega_{t}$.\\
We now  apply the main Theorem of \cite{Nutz} to the above filtered space and the processes
\[S_s=\left(\begin{array}{l}
s\\W_s\end{array}\right),\;\;  H_s=\left(\begin{array}{l}
b(X_s^{t,x,\alpha_0({\bf v}),{\bf v}},\alpha_0({\bf v})_s,{\bf v}_s)\\\sigma(X_s^{t,x,\alpha_0({\bf v}),{\bf v}},\alpha_0({\bf v})_s,{\bf v}_s)\end{array}\right).\]
We obtain that there exists a map $F:V_{t}\times\Omega_{t}\rightarrow{\mathcal C}^0([t,T],\R^N)$ which is adapted with respect to the filtration $(\FR^*_{t,s})$,  
such that, for all $\bar v\in\VR_d(t)$ and for all bounded test function $\varphi:\R^N\times\Omega_t\rightarrow\R$,
\[E_t[\varphi(F(\bar v(\cdot),\cdot),\cdot)]= E^{\bar v}_{t}[\varphi(F)]=E_{t}^{\bar v}[\varphi(\tilde X^{t,x,\alpha_0}_{t'})]=E_{t}[\varphi(X^{t,x,\alpha_0(\bar v),\bar v}_{t'})].\]
The nonanticipativity of $F$ follows from the fact that $F$ is adapted with respect to $(\FR^*_{t,s})$.
\end{proof}

We now define the value functions of the game. Given a bounded and Lipschitz continuous terminal cost $g:\R^N\rightarrow\R$,  an initial position $(t,x)\in[0,T]\times\R^N$ and a pair of adapted controls  $(u,v)\in\UR(t)\times\VR(t)$,  we define the cost function 
\[ J(t,x,u,v)=E_t\left[g(X^{t,x,u,v}_T)\right].\]
 It is well known that, for all pair of controls $(u,v)\in\UR(t)\times\VR(t)$,  $(s,x)\mapsto J(s,x,u,v)$ is Lipschitz in $x$ and H\"{o}lder in $s$, uniformly in $(u,v)$. It follows that, for all $(\alpha,v)\in\AR(t)\times\VR(t)$, $(s,x)\mapsto J(s,x,\alpha(v),v)$ is also Lipschitz continuous in $x$ and H\"{o}lder continuous  in $s$, uniformly in $\alpha$ and $v$. 
Furthermore, for all $(t,t',x)\in[0,T]^2\times\R^N$ with $t\leq t'$, and $\epsilon>0$ there exists $R>0$ such that, if we denote by $B_R(x)$ the ball in $\R^N$ with radius $R$ and center $x$, we have, for all $(\alpha,v)\in\AR(t)\times\VR(t)$,
\[ P_t[ X^{t,x,\alpha(v),v}_{t'}\in B_R(x))]>1-\epsilon.\]

 We introduce now the value functions of the game: for all $(t,x)\in[0,T]\times\R^N$, we set
\be\label{def:V+} V^+(t,x)=\inf_{\alpha\in\AR(t)}\sup_{\beta\in\BR(t)}J(t,x,\alpha,\beta),\ee
and
\be\label{def:V-} V^-(t,x)=\sup_{\beta\in\BR(t)}\inf_{\alpha\in\AR(t)}J(t,x,\alpha,\beta).\ee

\noindent It is clear that $ V^-(t,x)\leq  V^+(t,x)$. Moreover we have the equivalent formulations
\[ V^+(t,x)=\inf_{\alpha\in\AR(t)}\sup_{v\in\VR_d(t)}J(t,x,\alpha(v),v) \;\mbox{ and }\; V^-(t,x)=\sup_{\beta\in\BR(t)}\inf_{u\in\UR_d(t)}J(t,x,u,\beta(u)).\]

\begin{prop}\label{prop:Lipschitz} The value functions 
$V^+$ and $V^-$ are Lipschitz continuous in $x$, and H\"older continuous  in $t$.
\end{prop}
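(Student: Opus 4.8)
The plan is to treat the two regularities separately, handling Lipschitz continuity in $x$ directly from the uniform estimate on $J$, and reducing H\"older continuity in $t$ to a comparison between the games started at $t$ and at $t'$.

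For the spatial regularity I would argue directly from the uniform Lipschitz bound on the cost. Recall that $(s,x)\mapsto J(s,x,\alpha(v),v)$ is Lipschitz in $x$ with a constant $L$ independent of $(\alpha,v)$. Fix $t$ and $x,y\in\R^N$. For every $\alpha\in\AR(t)$ and every $v\in\VR_d(t)$ one has $J(t,x,\alpha(v),v)\le J(t,y,\alpha(v),v)+L|x-y|$; taking the supremum over $v$ and then the infimum over $\alpha$ preserves this additive constant, so $V^+(t,x)\le V^+(t,y)+L|x-y|$, and exchanging $x$ and $y$ gives $|V^+(t,x)-V^+(t,y)|\le L|x-y|$. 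The same computation applied to the sup-inf formulation yields the estimate for $V^-$.

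For the time regularity, fix $t<t'$ and write $h=t'-t$. By symmetry it suffices to bound $V^+(t,x)-V^+(t',x)$ in both directions, and the argument for $V^-$ is identical. The idea is to transfer near-optimal strategies between $\AR(t)$ and $\AR(t')$ (and likewise between $\BR(t)$ and $\BR(t')$): the players "wait" on the short interval $[t,t']$ and play on $[t',T]$ according to a strategy relative to $t'$, fed with the restriction to $[t',T]$ of the observed path and control; conversely one restricts a strategy relative to $t$ to $[t',T]$ after freezing the observations made on $[t,t']$ to a fixed path. In either case the flow (Markov) property of the SDE lets me rewrite the terminal cost of the concatenated strategy, started at $(t,x)$, as a cost started at $(t',X_{t'})$, where $X_{t'}=X^{t,x,u,v}_{t'}$ is the state reached at time $t'$. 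The three available ingredients then combine: since $f$ and $\sigma$ are bounded, $E_t|X^{t,x,u,v}_{t'}-x|^2\le Ch$, so $X_{t'}\in B_R(x)$ with probability at least $1-\epsilon$ by the localization estimate, and on this event the uniform spatial Lipschitz bound controls $|J(t',X_{t'},\cdot)-J(t',x,\cdot)|$ by $LR$, while the complementary event is absorbed using boundedness of $g$ and $\epsilon$; the residual dependence on the starting time is handled by the uniform H\"older estimate $|J(s,x,\alpha(v),v)-J(s',x,\alpha(v),v)|\le C|s-s'|^\gamma$. Optimizing $R$ and $\epsilon$ as functions of $h$ yields $|V^+(t,x)-V^+(t',x)|\le C\,h^{\gamma\wedge 1/2}$.

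The main obstacle is that the strategy spaces $\AR(t)$ and $\BR(t)$ genuinely depend on $t$, so the transfer of strategies must be carried out explicitly and shown to produce admissible, nonanticipative strategies with a delay. The delicate direction is the backward one: a strategy $\alpha\in\AR(t)$ prescribes the control on $[t',T]$ as a function of the observations made on $[t,t']$, which are unavailable to a player starting at $t'$; one must freeze these observations to a deterministic path and verify, using the delay property of Definition \ref{def:strategy} together with the localization estimate, that this freezing perturbs the cost only by a quantity vanishing with $h$. One must also check that under $P_t$ the Brownian increments on $[t',T]$ reproduce the law $P_{t'}$, so that the concatenated controls realize an admissible play of the $t'$-game, and that the concatenation is compatible with the fixed point of Lemma \ref{lem:fixpoint}.
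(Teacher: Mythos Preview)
The paper's own proof is a single sentence: both regularities are declared a ``straightforward consequence of the regularity of $J$''. Your treatment of the spatial Lipschitz bound is exactly this argument made explicit: since the optimisation sets $\AR(t),\VR_d(t)$ do not depend on $x$, the uniform Lipschitz constant of $J(t,\cdot,u,v)$ passes through the inf--sup unchanged. That part is correct and matches the paper.

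For the H\"older continuity in $t$ you go well beyond the paper, correctly observing that the optimisation sets now depend on $t$ and that one must transfer strategies between $\AR(t)$ and $\AR(t')$. The forward direction (extend $\alpha'\in\AR(t')$ to $\AR(t)$ by waiting on $[t,t']$, then use the flow property and $E|X_{t'}-x|\le Ch^{1/2}$) is fine. The gap is in your backward direction: you propose to freeze the unobserved Brownian path on $[t,t']$ to a deterministic one and assert that ``the delay property together with the localisation estimate'' keeps the cost perturbation of order $h$. This does not follow. The delay condition in Definition~\ref{def:strategy} only says that $\alpha(\omega,v)|_{[t,s+\delta]}$ is determined by $(\omega,v)|_{[t,s]}$; it gives no modulus of continuity of $\alpha$ in $\omega$, and $\alpha$ is merely Borel measurable. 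Replacing the true $\omega_1$ by $0$ can therefore change $\alpha(\omega,v)_s$ arbitrarily for every $s>t+\delta$. What your computation actually yields is $E_{\omega_1}\bigl[J(t',x,\alpha^{\omega_1}(v'),v')\bigr]\le V^+(t,x)+Ch^{1/2}$ for each fixed $v'$, and from this one cannot extract a single $\alpha'\in\AR(t')$ with $\sup_{v'}J(t',x,\alpha'(v'),v')\le V^+(t,x)+Ch^{1/2}$, because the expectation in $\omega_1$ and the supremum in $v'$ do not commute. The paper sidesteps all of this by a one-line appeal to the uniform regularity of $J$; if one wants a fully detailed argument for this direction, the freezing idea as you state it is not sufficient.
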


\begin{proof} The proof is a straightforward consequence of the regularity of $J$. 
\end{proof}

%

%
%
%

 Now we are able to establish a subdynamic programming principle.
\begin{prop}
\label{propppd1}
For all $x\in\R^N$ and $0\leq t_0\leq t_1\leq T$, the following subdynamic programming principle holds:\\
\begin{equation}
\label{ppd1}
 V^+(t_0,x)\leq \inf_{\alpha\in\AR(t_0)}\sup_{v\in\VR_d(t_0)}E_{t_0}[V^+(t_1,X^{t_0,x,\alpha(v),v}_{t_1})].\end{equation}
In particular, $V^+$ is a viscosity  subsolution of the following Hamilton-Jacobi-Isaacs equation
\begin{equation}
\label{eqv+}
\left\{\begin{array}{ll}
V_t+H^+(D^2V,DV,x,t)=0, &(t,x)\in[0,T]\times\R^N,\\\\

V(T,x)=g(x), &x\in\R^N,
\end{array}\right.
\end{equation}
where $\displaystyle H^+(A,\xi,x,t)=\inf_{u\in U}\sup_{v\in V}\left(\frac 12 tr(\sigma\sigma^*(t,x,u,v)A+\langle b(t,x,u,v),\xi\rangle\right)$.
\end{prop}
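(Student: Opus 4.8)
The plan is to establish the dynamic programming inequality (\ref{ppd1}) first and then to deduce the viscosity subsolution property by a standard test-function argument. For (\ref{ppd1}), I would fix $\epsilon>0$ and choose a strategy $\alpha^*\in\AR(t_0)$ that is $\epsilon$-optimal for the right-hand side, so that $\sup_{v\in\VR_d(t_0)}E_{t_0}[V^+(t_1,X^{t_0,x,\alpha^*(v),v}_{t_1})]$ does not exceed the infimum in (\ref{ppd1}) by more than $\epsilon$. The idea is then to build an admissible strategy $\bar\alpha\in\AR(t_0)$ that coincides with $\alpha^*$ on $[t_0,t_1]$ and, on $[t_1,T]$, switches to a near-optimal continuation strategy for the game started from the position $X^{t_0,x,\alpha^*(v),v}_{t_1}$ reached at time $t_1$. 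Conditioning at $t_1$ will bound the total cost of $\bar\alpha$ by the right-hand side of (\ref{ppd1}) up to $O(\epsilon)$, and since the equivalent formulation gives $V^+(t_0,x)\le\sup_{v}J(t_0,x,\bar\alpha(v),v)$, the claim follows after letting $\epsilon\to0$.

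The construction of $\bar\alpha$ requires a measurable selection of continuation strategies, which is the main technical point. For every $y\in\R^N$, the definition of $V^+(t_1,y)$ provides some $\alpha^{t_1,y}\in\AR(t_1)$ with $\sup_{v}J(t_1,y,\alpha^{t_1,y}(v),v)\le V^+(t_1,y)+\epsilon$. To organise these into a single measurable object I would invoke the tightness estimate stated just before the proposition to fix $R>0$ with $P_{t_0}[X^{t_0,x,\alpha^*(v),v}_{t_1}\in B_R(x)]>1-\epsilon$ uniformly in $v$, cover $B_R(x)$ by finitely many balls of small radius, and attach to each ball the continuation strategy of its centre; the Lipschitz continuity of $V^+$ (Proposition \ref{prop:Lipschitz}) and that of $v\mapsto J(t_1,\cdot,\alpha(v),v)$ control the error made by this piecewise-constant-in-$y$ choice. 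Splicing $\alpha^*$ with this selection produces $\bar\alpha$, whose admissibility---nonanticipativity with a common delay---follows from the delay property of $\alpha^*$ and of the $\alpha^{t_1,y}$, while Lemma \ref{lemf} guarantees that the resulting dynamics are well defined pathwise. Conditioning on $\FR_{t_0,t_1}$ and using the flow property of (\ref{dynjeu}) then gives, for every $v\in\VR_d(t_0)$,
\[ J(t_0,x,\bar\alpha(v),v)\ \le\ E_{t_0}\big[V^+(t_1,X^{t_0,x,\alpha^*(v),v}_{t_1})\big]+C\epsilon, \]
and taking the supremum over $v$ yields (\ref{ppd1}).

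For the viscosity subsolution property, let $\phi\in C^{1,2}$ be such that $V^+-\phi$ has a strict local maximum equal to $0$ at $(t_0,x_0)$, and suppose, for contradiction, that $\phi_t(t_0,x_0)+H^+(D^2\phi,D\phi,x_0,t_0)<0$. Since $H^+=\inf_{u}\sup_{v}$, by continuity there are $\theta>0$, $r>0$ and a single control value $\bar u\in U$ such that $\phi_t(s,y)+\frac12 tr(\sigma\sigma^*(y,\bar u,v)D^2\phi(s,y))+\langle b(y,\bar u,v),D\phi(s,y)\rangle\le-\theta$ for all $v\in V$ and all $(s,y)$ in a cylinder $[t_0,t_0+h]\times B_r(x_0)$. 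Applying (\ref{ppd1}) with the constant strategy $\alpha\equiv\bar u$, then using $V^+\le\phi$ near $(t_0,x_0)$ and It\^o's formula along $X^{t_0,x_0,\bar u,v}$, I would obtain $V^+(t_0,x_0)=\phi(t_0,x_0)\le\sup_{v}E_{t_0}[\phi(t_1,X_{t_1})]\le\phi(t_0,x_0)-\theta h+o(h)$, the error terms coming from the possible exit of $X$ from $B_r(x_0)$ being of order $o(h)$ by boundedness of the coefficients and of $V^+,\phi$ together with the strictness of the maximum. Choosing $h$ small gives a contradiction.

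The step I expect to be the crucial obstacle is the second paragraph: producing a genuinely admissible (Borel-measurable, nonanticipative, uniformly delayed) spliced strategy in the pathwise setting and justifying the conditioning at $t_1$. Here the delay structure of Definition \ref{def:strategy} and the pathwise reconstruction of the state supplied by Lemma \ref{lemf} are exactly what make the concatenation and the flow decomposition rigorous; Lemma \ref{lem:fixpoint} guarantees that $\bar\alpha$ against any $v$ still determines a well-defined pair of controls. Once (\ref{ppd1}) is in hand, the viscosity argument of the third paragraph is routine.
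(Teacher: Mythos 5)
Your overall architecture is the same as the paper's: take an $\epsilon$-optimal strategy $\alpha^*$ for the right-hand side of \eqref{ppd1}, cover a high-probability ball (tightness estimate) by finitely many small sets, attach to each an $\epsilon$-optimal continuation strategy in $\AR(t_1)$, splice, condition at $t_1$, and use the regularity of $V^+$ and $J$; the viscosity part is then standard. However, there is a genuine gap at exactly the step you flag as crucial, and it is not repaired by what you invoke. You branch the continuation strategy according to the position $X^{t_0,x,\alpha^*(v),v}_{t_1}$ reached \emph{at time $t_1$}. The resulting spliced map $\bar\alpha$ is nonanticipative but has \emph{no positive delay}: its value at times $s\in[t_1,t_1+\eta)$ depends on $(v,\omega)$ up to time $t_1$ itself, so for $t'$ slightly smaller than $t_1$, two pairs agreeing a.s.\ on $[t_0,t']$ need not produce the same output on $[t_0,t'+\eta]$, for any $\eta>0$. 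Hence $\bar\alpha$ violates the delay requirement of Definition \ref{def:strategy} and is not an element of $\AR(t_0)$, so the inequality $V^+(t_0,x)\le\sup_{v}J(t_0,x,\bar\alpha(v),v)$ cannot be invoked. Your assertion that admissibility ``follows from the delay property of $\alpha^*$ and of the $\alpha^{t_1,y}$'' is precisely where the argument breaks: those delay properties do not survive a splicing whose branching uses information up to the splicing time.

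The paper's proof contains the extra idea needed to close this gap. One takes the delay $\delta$ of the $\epsilon$-optimal strategy $\alpha_0$ with $\delta\le\epsilon^2\wedge(t_1-t_0)$, and branches not on $X_{t_1}$ but on $X_{t_1-\delta}$, which by Lemma \ref{lemf} is computable through a nonanticipative map $F(v,\omega)$ of the observed control and Brownian path. Since the selection of the continuation strategy $\alpha^k$ then uses information only up to time $t_1-\delta$, the spliced strategy $\alpha^\epsilon$ retains a positive delay (the minimum of $\delta$ and the delays of the $\alpha^k$) and is a legitimate element of $\AR(t_0)$. This is also why the paper applies the tightness estimate at time $t_1-\delta$ rather than $t_1$, and why the partition sets $O_k$ classify $X_{t_1-\delta}$. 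The price is an additional error term, comparing $V^+(t_1,X_{t_1})$ and $J(t_1,X_{t_1},\cdot,\cdot)$ with their values at the centers $x_k$ of the sets containing $X_{t_1-\delta}$; this is controlled because $E_{t_0}\bigl[|X_{t_1}-X_{t_1-\delta}|\bigr]\le C\sqrt{\delta}\le C\epsilon$ by boundedness of the coefficients, and is absorbed into the $C\epsilon$ terms via the Lipschitz/H\"older regularity you already cite. With this modification (and otherwise keeping your conditioning argument, which matches the paper's product decomposition $P_{t_0}=P_{t_0,t_1}\otimes P_{t_1}$), your proof becomes the paper's proof.
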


\begin{proof}
Following \cite{flemingsouganidis} we set 
$\Omega_{t_0,t_1}=\{ \omega:[t_0,t_1]\rightarrow\R^d \mbox{ continuous}\}$.
For $\omega\in\Omega_{t_0}$, we define the pair $(\omega_1,\omega_2)\in\Omega_{t_0,t_1}\times\Omega_{t_1}$ by
\[ \omega_1=\omega|_{[t_0,t_1)}, \; \omega_2=\omega|_{[t_1,T]}-\omega(t_1).\]
The map $\omega\mapsto \pi(\omega):=(\omega_1,\omega_2)$ allows to identify  $\Omega_{t_0}$ with $\Omega_{t_0,t_1}\times\Omega_{t_1}$ and we have $P_{t_0}=P_{t_0,t_1}\otimes P_{t_1}$, where $P_{t_0,t_1}$ is the Wiener measure on $\Omega_{t_0,t_1}$.

For $v\in\VR(t_0)$, we denote by $v_2$ the restriction of $v$ on $[t_1,T]$.
We further set 
$\tilde v_2(\omega_1,\omega_2):=v_2(\omega)$ and remark that, if $v\in\VR_d(t_0)$, then $(\tilde v_2(\omega_1,\cdot),\omega_1\in\Omega_{t_0})$ is a family of processes which belongs to $\VR_d(t_1)$.

 Let us now denote by $V(t_0,t_1,x_0)$ the right-hand side of (\ref{ppd1}).
We fix $\epsilon>0$ and consider $\alpha_0\in\AR(t_0)$ $\epsilon$-optimal for $V(t_0,t_1,x_0)$: for all  $v\in\VR_d(t_0)$, 
\begin{equation}
\label{eq0}
 E_{t_0}[V^+(t_1,X^{t_0,x_0,\alpha_0(v),v}_{t_1})]\leq V(t_0,t_1,x_0)+\epsilon.
\end{equation}
Let $\delta>0$ be the delay of $\alpha_0$. We can suppose that $\delta\leq \epsilon^2\wedge(t_1-t_0)$.
Let $R>0$ be such that, for all $v\in\VR_d(t_0)$,
\[ P_{t_0}\left[X_{t_1-\delta}^{t_0,x_0,\alpha_0(v),v}\in B_R(x_0)\right]>1-\epsilon.\]
For $K\in\N$ large enough, let $\{O_0,\ldots O_K\}$ be a Borel partition of $\R^N$ such that $O_1,\ldots, O_K$ have a radius smaller than  $\epsilon$ 	and $\displaystyle B_{R}(x_0)\subset\cup_{k=1}^KO_k$. Pick, for each $k\in\{ 1,\ldots,K\}$, $x_k\in O_k$ and $\alpha^k\in \AR(t_1)$ $\epsilon$-optimal for $V^+(t_1,x_k)$. 
We fix  $\alpha^0\in\AR(t_1)$ some arbitrary strategy. 
By lemma \ref{lemf}, there exists a measurable, nonanticipative map $F:V_{t_0}\times\Omega_{t_0}\rightarrow{\mathcal C}^0(\R^N)$ such that, for all $v\in\VR_d(t_0)$, $P_{t_0}$-a.s., $X^{t_0,x_0,\alpha_0(v),v}_{t_1-\delta}(\omega)=F(v(\omega),\omega)_{t'}$. We  define  a new strategy $\alpha^\epsilon\in\AR(t_0)$ by
\[
\alpha^\epsilon(v,\omega)_s=\left\{
\begin{array}{ll}
\alpha_0(v,\omega)_s,& \mbox{ if } s\in[t_0,t_1),\\
\alpha^k(v_2,\omega_2)_s,& \mbox{ if } s\in[t_1,T] \mbox{ and }F(v,\omega)\in O_k,\; k\in\{ 0,\ldots,K\}.
\end{array} \right.\]
Set, for all $k\in\{ 0,\ldots,K\}$, $A_k=\{ X^{t_0,x_0,\alpha_0(v),v}_{t_1-\delta}\in O_k\}\subset\FR_{t_0,t_1}$.  Recall that the sets $\{F(v)\in O_k\}$ and $A_k$ differs only by a $P_{t_0}$-null set, and that $P_{t_0}(A_0)\leq \epsilon$.\\
Since $V^+$ is bounded, Lipschitz continuous in $x$ and H\"older in $t$, we get, for all $v\in\VR_d(t_1)$
\begin{equation}
\label{eq1}
 \begin{array}{rl}
\displaystyle  E_{t_0}\left[V^+(t_1,X_{t_1}^{t_0,x_0,\alpha_0(v),v})\right]
= & \displaystyle  E_{t_0}\left[\sum_{k=0}^K\i_{A_k} V^+(t_1,X_{t_1}^{t_0,x_0,\alpha_0(v),v})\right]\\\\

\geq & \displaystyle  E_{t_0}\left[\sum_{k=1}^K\i_{A_k} V^+(t_1,X_{t_1}^{t_0,x_0,\alpha_0(v),v})\right]-\|V^+\|_\infty P_{t_0}(A_0) \\\\

\geq & \displaystyle E_{t_0}\left[\sum_{k=1}^K\i_{A_k}V^+(t_1,x_k)\right]-C\epsilon

\end{array}
\end{equation}
where $C$ denotes a constant which changes from line to line. 

 Now let us come from the left hand side of (\ref{ppd1}):
For any $v\in\VR_d(t_0)$, we can write:
\begin{equation}
\label{eq2}
J(t_0,x_0,\alpha^\epsilon(v),v)=
E_{t_0}\left[\sum_{k=0}^K\i_{A_k}E_{t_0}[g(X_T^{t_1,X_{t_1}^{t_0,x_0,\alpha_0(v),v},\alpha^k(v_2),v_2})ds|\FR_{t_1}]\right].
\end{equation}
But, for all $k\in\{ 1,\ldots,K\}$, for $P_{t_0}$-allmost $\omega\in\Omega_{t_0}$, we have
\begin{equation}
\label{eq3}\begin{array}{rl}
E_{t_0}\left[g(X_T^{t_1,X_{t_1}^{t_0,x_0,\alpha_0(v),v},\alpha^k(v_2),v_2})|\FR_{t_1}\right](\omega)=&
E_{t_1}\left[g(X_T^{t_1,X_{t_1}^{t_0,x_0,\alpha_0(v),v}(\omega_1),\alpha^k(\tilde v_2(\omega_1)),\tilde v_2(\omega_1)})\right]\\\\

=&J(t_1,X_{t_1}^{t_0,x_0,\alpha_0(v),v},\alpha^k(v_2),v_2)(\omega).
\end{array}\end{equation}
Since $J$ is Lipschitz continuous in $x$  and H\"older in $t$ and   $\alpha^k$ is $\epsilon$-optimal for $V^+(t_1,x_k)$, it holds that
\begin{equation}
\label{eq4}\begin{array}{rl}
E_{t_0}\left[\i_{A_k}J(t_1,X_{t_1}^{t_0,x_0,\alpha_0(v),v},\alpha^k(v_2),v_2)\right]\leq &
E_{t_0}\left[\i_{A_k}( J(t_1,x_k,\alpha^k(v_2),v_2)+C\epsilon)\right]\\\\

 \leq & E_{t_0}\left[\i_{A_k} (V^+(t_1,x_k)+C\epsilon)\right].
\end{array}\end{equation}
 Putting together (\ref{eq0})-(\ref{eq4}), we get
\[ J(t_0,x_0,\alpha^\epsilon(v),v)\leq V(t_0,t_1,x_0)+C\epsilon.\]
Taking the sup over $v\in\VR_d(t_0)$ then gives the result.

The proof of the supersolution property from the subdynamic programming is standard (see \cite{flemingsouganidis}). 
\end{proof}

 In a symmetrical  way, we obtain a superdynamic programming principle for $V^-$ and the fact that $V^-$ is a subsolution:

\begin{prop}
\label{propppd2}
For all $x\in\R^N$ and $0\leq t_0\leq t_1\leq T$, the following superdynamic programming principle holds:\\
\[
 V^-(t_0,x)\geq \inf_{\beta\in\BR(t_0)} \sup_{u\in\UR_d(t_0)}E_{t_0}[V^-(t_1,X^{t_0,x,u,\beta(u)}_{t_1})].\]
Therefore $V^-$ is a supersolution in viscosity sense of the following Hamilton-Jacobi-Isaacs equation
\[
\left\{\begin{array}{ll}
V_t+H^-(D^2V,DV,x,t)=0, &(t,x)\in[0,T]\times\R^N,\\\\

V(T,x)=g(x), &x\in\R^N,
\end{array}\right.
\]
where $H^-(A,\xi,x,t)=\sup_{v\in V}\inf_{u\in U}\left(\frac 12 tr(\sigma\sigma^*(t,x,u,v)A+\langle b(t,x,u,v),\xi\rangle\right)$.
\end{prop}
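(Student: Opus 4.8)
\textbf{Plan of proof for Proposition \ref{propppd2}.}
The strategy is to prove the superdynamic programming inequality for $V^-$ by a symmetrical argument to the one used for $V^+$ in Proposition \ref{propppd1}, but with the roles of the two players interchanged: the minimizing inner loop over $u$ replaces the maximizing inner loop over $v$, and the outer $\sup_\beta$ replaces the outer $\inf_\alpha$. Because $V^-$ is defined through $\sup_{\beta}\inf_u J$, the direction of every optimality estimate must be reversed. The key structural fact I can reuse is Lemma \ref{lemf}, applied to strategies $\beta\in\BR(t)$ in place of $\alpha\in\AR(t)$: for a fixed $\beta_0$ there is a measurable nonanticipative map $F$ so that $F(\bar u(\cdot),\cdot)=X^{t_0,x,\bar u,\beta_0(\bar u)}$ for all $\bar u\in\UR_d(t_0)$, and this lets me localize the state at the intermediate time $t_1-\delta$ through a Borel partition.

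First I would fix $\epsilon>0$ and denote by $V^-(t_0,t_1,x_0)$ the right-hand side of the claimed inequality. I would choose $\beta_0\in\BR(t_0)$ that is $\epsilon$-optimal for this inf-sup quantity, so that for all $u\in\UR_d(t_0)$ one has $E_{t_0}[V^-(t_1,X^{t_0,x_0,u,\beta_0(u)}_{t_1})]\ge V^-(t_0,t_1,x_0)-\epsilon$; here $\epsilon$-optimality for a $\sup_\beta\inf$ value produces lower bounds, which is exactly the direction needed for a "$\ge$" conclusion. With $\delta$ the delay of $\beta_0$, taking $\delta\le\epsilon^2\wedge(t_1-t_0)$, I would use the uniform tightness estimate to pick $R$ with $P_{t_0}[X^{t_0,x_0,u,\beta_0(u)}_{t_1-\delta}\in B_R(x_0)]>1-\epsilon$ uniformly in $u$, and a Borel partition $\{O_0,\ldots,O_K\}$ with the cells $O_1,\ldots,O_K$ of radius below $\epsilon$ covering $B_R(x_0)$. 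For each $k\ge1$ I would pick $x_k\in O_k$ and a strategy $\beta^k\in\BR(t_1)$ that is $\epsilon$-optimal for $V^-(t_1,x_k)$, i.e.\ achieving $\inf_u J(t_1,x_k,u,\beta^k(u))\ge V^-(t_1,x_k)-\epsilon$.

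Next I would glue these into a single strategy $\beta^\epsilon\in\BR(t_0)$ that plays $\beta_0$ on $[t_0,t_1)$ and, on $[t_1,T]$, plays $\beta^k$ read on the shifted coordinates $(\omega_2)$ whenever $F(u,\omega)\in O_k$, using the identification $\Omega_{t_0}\cong\Omega_{t_0,t_1}\times\Omega_{t_1}$ and the associated restriction $\tilde u_2(\omega_1,\cdot)\in\UR_d(t_1)$ exactly as in Proposition \ref{propppd1}. The nonanticipativity of $F$ guarantees the cell membership is $\FR_{t_0,t_1}$-measurable, so $\beta^\epsilon$ is a legitimate delayed strategy. The core computation is then, for any $u\in\UR_d(t_0)$, to conditon at $t_1$ and apply the Markov/flow decomposition $E_{t_0}[g(X_T^{t_1,X^{t_0,x_0,u,\beta_0(u)}_{t_1},u_2,\beta^k(u_2)})|\FR_{t_1}]=J(t_1,X^{t_0,x_0,u,\beta_0(u)}_{t_1},u_2,\beta^k(u_2))$ on the event $A_k=\{X^{t_0,x_0,u,\beta_0(u)}_{t_1-\delta}\in O_k\}$. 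Using the Lipschitz-in-$x$ and Hölder-in-$t$ regularity of $J$ together with the $\epsilon$-optimality of $\beta^k$, each term satisfies $E_{t_0}[\i_{A_k}J(t_1,X^{t_0,x_0,u,\beta_0(u)}_{t_1},u_2,\beta^k(u_2))]\ge E_{t_0}[\i_{A_k}(V^-(t_1,x_k)-C\epsilon)]$, with the null cell $A_0$ contributing an error $\|V^-\|_\infty P_{t_0}(A_0)\le C\epsilon$. Combining with the regularity estimate that replaces $V^-(t_1,X_{t_1})$ by $\sum_k\i_{A_k}V^-(t_1,x_k)$ up to $C\epsilon$ yields $\inf_u J(t_0,x_0,u,\beta^\epsilon(u))\ge V^-(t_0,t_1,x_0)-C\epsilon$, hence $V^-(t_0,x_0)\ge V^-(t_0,t_1,x_0)-C\epsilon$, and letting $\epsilon\to0$ gives the inequality.

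The main obstacle I anticipate is the careful handling of the inequality directions throughout: since $V^-$ is a sup-inf, the outer $\epsilon$-optimal $\beta_0$, the inner $\epsilon$-optimal $\beta^k$, and the truncation error at $A_0$ must all be steered so that every estimate points the same way ("$\ge$"), which is the reverse of the $V^+$ argument and is the step where a sign slip would silently break the proof. A secondary technical point is verifying that the glued strategy $\beta^\epsilon$ genuinely has a common delay and that the measurable selection of the cell index via $F$ is nonanticipative, but these follow from Lemma \ref{lemf} and the delay structure exactly as in the $V^+$ case. The passage to the viscosity supersolution property from the superdynamic programming principle is then standard and I would only cite \cite{flemingsouganidis}.
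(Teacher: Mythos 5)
Your proposal is correct and follows essentially the paper's own route: the paper proves Proposition \ref{propppd2} in a single line by invoking symmetry with the detailed proof of Proposition \ref{propppd1}, and your argument is precisely that symmetric construction (an $\epsilon$-optimal $\beta_0$ with delay $\delta$, localization of $X_{t_1-\delta}$ via Lemma \ref{lemf} applied to strategies of Player~II, a Borel partition with $\epsilon$-optimal $\beta^k$ for $V^-(t_1,x_k)$, the glued strategy, and conditioning at $t_1$ with the regularity of $J$). One remark: the quantity your estimates actually bound from below---and the only one this symmetric argument can reach---is $\sup_{\beta\in\BR(t_0)}\inf_{u\in\UR_d(t_0)}E_{t_0}[V^-(t_1,X^{t_0,x,u,\beta(u)}_{t_1})]$, so despite your phrase ``this inf-sup quantity'' you have implicitly (and rightly) read the $\inf_\beta\sup_u$ in the paper's displayed formula as a typo for $\sup_\beta\inf_u$, which is the form obtained by symmetry from \eqref{ppd1} and the one needed to derive the supersolution property for $H^-$.
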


We can now follow  \cite{flemingsouganidis}  to obtain:

\begin{thm}
Under Isaacs' condition:
\[ \forall A\in\SR(N),\ \xi\in\R^N,\ x\in\R^N,\ t\in[0,T], \ H^+(A,\xi,x,t)= H^-(A,\xi,x,t):= H(A,\xi,x,t),\]
 the game has a value $V^+=V^-$ which is the unique solution in viscosity sense of 
\begin{equation}
\label{eqv-}
\left\{\begin{array}{l}
V_t+H(D^2V,DV,x,t)=0, \; (t,x)\in[0,T]\times\R^N,\\\\

V(T,x)=g(x), \; x\in\R^N.
\end{array}\right.
\end{equation}
\end{thm}

\section{Erratum to ``Stochastic Differential Games with Asymmetric Information'' \cite{sdgai}.}\label{sec:3}

The definition  of strategy introduced in Definition \ref{def:strategy} is mainly motivated by a gap in the paper  \cite{sdgai}. This paper deals with two-player, zero-sum differential games in which the players have a private information on the game. The flaw in the paper is {\it not} with this information issue, but with some technicalities arising with the notion of strategy developed there.

In the framework of \cite{sdgai} a {\em strategy} for player I starting at time $t$ is a Borel-measurable map $\alpha : [t,T]\times\CR([t,T],\R^n)\rightarrow U$ for which there exists $\delta>0$ such that, $\forall s\in[t,T], f,f'\in\CR([t,T],\R^n)$, if $f=f'$ on $[t,s]$, then $\alpha(\cdot,f)=\alpha(\cdot,f')$ on  $[t,(s+\delta)\wedge T]$. With this definition, Player I only needs to observe the realization $f$ of the solution of the stochastic differential equation. We can use this notion of strategy to define as in \eqref{def:V+} and \eqref{def:V-} the value functions $V^+$ and $V^-$ via a fixed point argument very close to Lemma \ref{lem:fixpoint}. The issue arises when one tries to prove that these value functions are Lipschitz continuous in space. Indeed, given
a strategy $\alpha$ as above, an adapted control $v\in {\mathcal V}(t)$ and two initial conditions $x$ and $x'$, there seems to be no way to built a new strategy $\alpha'$ such that the solutions $X^{t,x,\alpha,v}$ and $X^{t,y,\alpha',v}$ are sufficiently close (in particular, the idea consisting in choosing $\alpha'(s,f)=\alpha(s,f-x'+x)$ does not seem to work). As a consequence, there is a serious gap in the proof of Lemma 2.2 of \cite{sdgai}. 

In order to correct this, we have to change the notion of strategies and replace it with the one developed in the present note. This implies several changes, that we list below. 

\begin{enumerate} 
\item As in section \ref{defdef}, we assume that we work on the Wiener space 
$\Omega_t=\CR([t,T]\ ,\R^d)$ endowed with the Wiener measure $P_t$  and consider, for all initial time $t\in[0,T]$,  the canonical process $(B_s(\omega)=\omega(s), s\in[t,T])$. The filtration $(\FR_{t,s},t\leq s)$ is the one  generated by the canonical process.  

\item The definition of strategies (Definition 2.2 in  \cite{sdgai}) must be replaced by the one in Definition \ref{def:strategy}. This new notion of strategy must also be used in the definition of random strategies defined in \cite{sdgai}, p. 5. 
%
%
%

\item The fixed point (Lemma 2.1 in  \cite{sdgai}) has to be replaced by Lemma \ref{lem:fixpoint}. 

%


\item The Lipschitz continuity of the value functions (Lemma 2.2 in \cite{sdgai}) becomes straightforward because one can now use the same strategy for different initial positions and get an estimate as in Proposition \ref{prop:Lipschitz}. 

\item In the proof of Proposition 3.1 in \cite{sdgai}, the construction of the strategy has to be modified as follows: by Lemma \ref {lemf}, there exists a measurable map $F:U_t\times\Omega\rightarrow\R^N$, such that, for all $u\in U_t$, it holds that
\[ X^\epsilon_{t_1-\delta}=F(u,\cdot), P\mbox{-a.s.} .\]
Now, for $l\in L, l=(l_0,\ldots,l_M)$, we define $(\beta^\epsilon_j)^l\in\BR(t_0)$ by: $\forall (u, \omega)\in U_{t_0}\times \Omega_{t_0}, \forall t\in[t_0,T],$
\[\begin{array}{l}
\qquad (\beta^\epsilon_j)^l(t,u,\omega)=\left\{\begin{array}{ll}
\beta^\epsilon(v,\omega)_t, \mbox{if } t\in[t_0,t_1),\\
\beta^{m,l_m}_j(u|_{[t_1,T]},\omega_2)& \mbox{if } t\in[t_1,T] \mbox{ and } F(u,\omega)\in E_m.
\end{array}\right.
\end{array}\]
We set $\bar\beta^\epsilon_j:=((\beta^\epsilon_j)^l;s^l_j,l\in L)\in\BR(t_0)$, and finally $\hat\beta^\epsilon=(\bar\beta^\epsilon_1,\ldots,\bar\beta^\epsilon_J)$. Then we can check as in \cite{sdgai} that $\hat\beta^\epsilon$ gives the subdynamic programming. 

\item In the proof of the Corollary 3.1 in \cite{sdgai}, the strategy  has to be changed in the following way: we set $\beta_0(v,\omega)_t=v_0$ for all $(t,v,\omega)\in[t_0,T]\times V_{t_0}\times\Omega_{t_0}$.
\end{enumerate}




\begin{thebibliography}{}

\end{thebibliography}


\begin{thebibliography}{abc99xyz}



\bibitem{buckli} Buckdahn R., Li J. {\it Stochastic differential games and viscosity solutions of Hamilton-Jacobi-Bellman-Isaacs equations.} SIAM J. Control Optim. 47 (2008), no. 1, 444-475.


\bibitem{sdgai} Cardaliaguet P.,  Rainer C. {\it Stochastic Differential Games with Asymmetric Information.} Appl. Math. Optim. (2009) 59, 1-36.

\bibitem{ELKA72} Elliot N.J. \& Kalton N.J. (1972) 
{\it The existence of value in differential games} 
Mem. Amer. Math. Soc., 126.


\bibitem{flemingsouganidis} Fleming W.H., Souganidis P.E. {\it On the existence of value functions of two-player, zero-sum stochastic differential games.} Indiana Univ. Math. J.38(2), 293-314 (1989)

\bibitem{Nutz} Nutz M. {\it Pathwise Construction of Stochastic Integrals.}
Electronic Communications in Probability, Vol. 17, No. 24, pp. 1-7, 2012. 

\bibitem{ROX69} Roxin E. (1969) 
{\it The axiomatic approach in differential 
games}, J. Optim. Theory Appl. 3, 153-163.


\bibitem{VAR167} Varaiya P. (1967) {\it The existence of solution to a 
differential game}, SIAM J. Control Optim. 5, 153-162.

\end{thebibliography}
\end{document}